\documentclass{amsart}

\usepackage{booktabs}
\usepackage{float}

\newtheorem{theorem}{Theorem}[section]
\newtheorem{lemma}[theorem]{Lemma}

\theoremstyle{definition}
\newtheorem{definition}[theorem]{Definition}
\newtheorem{proposition}[theorem]{Proposition}
\newtheorem{corollary}[theorem]{Corollary}
\newtheorem{example}[theorem]{Example}

\theoremstyle{remark}
\newtheorem{remark}[theorem]{Remark}

\newcommand{\lcm}{lcm}
\newcommand{\Aut}{Aut}

\numberwithin{equation}{section}

\begin{document}
    
    \title{Twisted Frobenius-Schur Indicators and Character Degree Sums in Dihedral Groups}
    
    \author[Yerrapati]{Venkata Subbaiah Yerrapati}
    \address{(Yerrapati) Department of Mathematics, S. V. National Institute of Technology, Surat-7, Gujarat, India}
    \email{yvsmath@gmail.com}
    \thanks{The first author would like to thank the Department of Education, Government of India, for the financial assistance.}
    
    \author[Dixit]{Rahul Dixit}
    \address{(Dixit) Department of Artificial Intelligence, S. V. National Institute of Technology, Surat-7, Gujarat, India}
    \email{rahuldixit@aid.svnit.ac.in}
    
    \author[Shukla]{Ajay Kumar Shukla}
    \address{(Shukla) Department of Mathematics, S. V. National Institute of Technology, Surat-7, Gujarat, India}
    \email{aks@amhd.svnit.ac.in}
    
    \subjclass[2020]{Primary 20C15, 20F28, 20F55; Secondary 20B25, 20B35}
    
    \date{\today}
    
    
    \keywords{complex representations, involutions, finite groups, automorphisms, dihedral group representations}
    
    \begin{abstract}
        Let $G$ be a finite group and $T(G)$ be the sum of the degrees of its irreducible complex representations. We investigate the relationship between $T(G)$ and the number of twisted involutions $m_\sigma = |\{g \in G \mid \sigma(g) = g^{-1}\}|$ for an automorphism $\sigma$. While it is known that $T(G) = m_e$ for the identity automorphism $e$ in certain cases (e.g., real characters), we analyze this relation for non-identity automorphisms of groups of order $p, 2p, p^2$. We prove that for the family of Dihedral groups $D_n$, the inequality $T(D_n) \geq m_\sigma$ holds for all $\sigma \in \Aut(D_n)$. We provide a complete classification of $m_\sigma$ using number-theoretic properties of the automorphism parameters.
    \end{abstract}

    \maketitle
    
    \section{Introduction and Preliminaries}
	
	Let $G$ be a finite group and let $\Aut(G)$ be the automorphism group of $G$. For a fixed automorphism $\sigma \in \Aut(G)$, we consider the following set 
	\begin{align*}
		S_{\sigma} = \{\pi \in G \ | \ \sigma(\pi) = \pi^{-1}\},
	\end{align*}
	such sets arise naturally when studying involutions twisted by automorphisms. Frobenius and Schur \cite{zbMATH02646565} introduced Frobenius-Schur indicator in the studying of real representations of finite group. For an irreducible representation $(\pi, V)$ of finite group $G$ with character $\chi$, the corresponding Frobenius-Schur indicator is defined by
    \begin{align}
        \epsilon(\pi) = \frac{1}{|G|}\sum_{g \in G}\chi(g^2)
    \end{align}
    Indeed, they showed that $\epsilon(\pi) \in \{1, 0, -1\}$ classifying the representations into real, complex and quaternion. The classical result of character theory of finite groups  proves that the number of involutions of a finite group coincides with the sum of degrees of its irreducible real representations (Corollary 4.6, \cite{MR2270898}*{pp. 81}).


    During the last twenty years, substantial progress has made towards formulating more general versions of these invariants. A significant development in this direction was contributed by Bump and Ginzburg \cite{MR2068079}, who extended the foundational work of Mackey \cite{MR100035}, and Kawanaka–Matsuyama \cite{MR1078503}, introduced  a variant of Frobenius–Schur indicators twisted by underlying group automorphisms. These twisted indicators play a significant role in the analysis of multiplicity-free permutation representations, the construction of models for finite groups (as described in \cite{MR414792}), and in understanding Shintani’s character lifting for finite reductive groups.

    Gow \cite{MR716800} proved the following result, for an odd prime $q$, the number of symmetric matrices in $\mathrm{GL}(n, \mathbb{F}_q)$ is equal to the sum of the degrees of all its irreducible representations of $\mathrm{GL}(n, \mathbb{F}_q)$ and similar result for $GSp(2n,\mathbb{F}_q)$ due to Vinroot \cite{MR2173976}.

     Extending the Frobenius and Schur work to other automorphisms and complex representations a conjecture was proposed by Des Machale which is noted by Khukhro and Mazurov in \cite{MR1392713}*{Problem 16.60}. As far as literature is concerned, we could not find any specific information on this conjecture.

    We are motivated to work on groups of order $p, 2p, p^2$ which are elementary groups. And the dihedral groups appear while the study of groups of order $2p$ due to Sylow's Theorems \cite{MR2286236}.
    The Automorphism group of dihedral groups $\Aut(D_l)$ is well studied and completely characterized. Furthermore, all the groups of order $p, p^2, 2p$ are classified that each is isomorphic to a cyclic group or direct product of cyclic group in the abelian case and isomorphic to dihedral groups in the non-abelian case. Moreover, Cayley proved that every finite group $G$ can be identified by a subgroup of symmetric group $\mathfrak{S}_l$ which is noted by Artin in \cite{MR1129886} which plays a major role in extending concepts to other non-abelian finite groups.

    As in the case of real representations, we have identity automorphism, similar to that we expect that the sum of degrees of all its irreducible complex representations of subgroup of symmetric group will coincide with the number of involutions for some non-identity automorphism. This will be explored in a future article. Therefore, this paper would be the starting point for understanding the subgroup $D_l$ of $\mathfrak{S}_l$ and plays a very important role as underlined above and our main aim is to study this structure in this paper.

    We begin by reviewing some basic definitions and facts from group theory and number theory that will be used throughout in this work. Unless otherwise specified, the terminology, notations, and preliminary results follows the conventions as mentioned in \cite{MR1129886} and \cite{MR2286236}. For a finite group $G$, we write $|G|$ for its order, and for an element $g \in G$, the notation $\mathrm{ord}(g)$ refers to the order of $g$ and $T(G)$ represent the sum of degrees of all its irreducible complex representations. We denote their greatest common divisor and least common multiple by $\gcd(x_1, x_2)$ and $\lcm(x_1, x_2)$, respectively, for integers $x_1, x_2$.
    
    \begin{proposition}[Corollary 2.8.11, \cite{MR1129886}*{pp. 58}]
        Let $p$ be a prime number and suppose that $G$ is a group with $|G| = p$. Then $G$ must be generated by a single element, in particular, $G$ is cyclic.
    \end{proposition}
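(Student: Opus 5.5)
The plan is to use Lagrange's theorem together with the fact that a prime has only two positive divisors. Since $p \geq 2$, the group $G$ has at least two elements, so we may choose an element $g \in G$ with $g \neq e$. We then look at the cyclic subgroup $\langle g \rangle = \{g^k \mid k \in \mathbb{Z}\}$ generated by $g$. Its order is $\mathrm{ord}(g)$, and it contains both $e$ and $g$. Hence $|\langle g \rangle| \geq 2$.

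By Lagrange's theorem, $|\langle g \rangle|$ divides $|G| = p$. The only positive divisors of $p$ are $1$ and $p$, and the first is excluded. So $|\langle g \rangle| = p = |G|$. A subset of a finite set with the same cardinality is the whole set, so $\langle g \rangle = G$. Thus $G$ is generated by the single element $g$, and in particular $G$ is cyclic. The same argument shows that every non-identity element generates $G$, a slightly stronger conclusion that may be recorded for later use.

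There is no serious obstacle. The only point needing a moment's care is the identification $|\langle g\rangle| = \mathrm{ord}(g)$, which lets Lagrange's theorem be applied. This is standard: the powers $e, g, \dots, g^{\mathrm{ord}(g)-1}$ are distinct and exhaust $\langle g \rangle$. I would simply cite it rather than reprove it.
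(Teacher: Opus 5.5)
Your proof is correct: Lagrange's theorem forces $|\langle g\rangle| = p$ for any $g \neq e$, so $\langle g\rangle = G$. The paper gives no proof of its own and simply cites Artin's Corollary 2.8.11, which is proved by this same argument, including your stronger observation that every non-identity element generates $G$.
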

    \begin{corollary}\label{prime_order_is_abelian}
        Let $G$ be a group whose order is prime number. Then $G$ must be an abelain group.
    \end{corollary}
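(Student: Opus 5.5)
The plan is to deduce this directly from the preceding Proposition (Corollary 2.8.11 of \cite{MR1129886}). Since $|G| = p$ is prime, that result gives an element $g \in G$ with $G = \langle g \rangle$. Every element of $G$ is then a power of $g$.

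Next I would show commutativity by hand. Take arbitrary $x, y \in G$ and write $x = g^{a}$ and $y = g^{b}$ for some integers $a, b$. By the exponent laws in a group,
\begin{align*}
xy = g^{a}g^{b} = g^{a+b} = g^{b+a} = g^{b}g^{a} = yx .
\end{align*}
The middle equality uses only that addition of integers is commutative. Hence $G$ is abelian.

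There is no genuine obstacle. The only point needing care is that the law $g^{a}g^{b}=g^{a+b}$ must hold for all integers $a,b$, including negative ones; this is standard. Alternatively, one can note that every element has the form $g^{a}$ with $0 \le a < p$, so only nonnegative exponents are needed.
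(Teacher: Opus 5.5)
Your proof is correct and takes the same approach as the paper. The paper states this as an immediate corollary of the preceding proposition that a group of prime order is cyclic, leaving implicit the fact that cyclic groups are abelian; you verify that fact explicitly via $g^{a}g^{b}=g^{a+b}=g^{b}g^{a}$.
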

    \begin{proposition}[Proposition 7.3.3, \cite{MR1129886}*{pp. 198}]\label{p^2_is_abelian}
        Let $p$ be a prime number and $G$ be a group such that $|G| = p^2$. Then $G$ is abelian.
    \end{proposition}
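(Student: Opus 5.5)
The plan is the standard class-equation argument: show that the center $Z(G)$ is nontrivial, and then use the fact that $G/Z(G)$ cyclic forces $G$ abelian. Since $|G| = p^2$, Lagrange's theorem gives $|Z(G)| \in \{1, p, p^2\}$. The task is to rule out $1$ and $p$.

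\textbf{Step 1: $Z(G) \neq \{e\}$.} Let $G$ act on itself by conjugation and write the class equation
\begin{align*}
|G| = |Z(G)| + \sum_{i} [G : C_G(g_i)],
\end{align*}
where the sum runs over representatives $g_i$ of the conjugacy classes of size greater than one. Each index $[G:C_G(g_i)]$ divides $p^2$ and exceeds $1$, so it is divisible by $p$. Since $p \mid |G|$ as well, it follows that $p \mid |Z(G)|$. In particular $|Z(G)| \geq p$.

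\textbf{Step 2: $|Z(G)| \neq p$.} Suppose $|Z(G)| = p$. Then $|G/Z(G)| = p$, so $G/Z(G)$ is cyclic by the preceding Proposition (Corollary 2.8.11). Write $G/Z(G) = \langle xZ(G)\rangle$. Every element of $G$ then has the form $x^k z$ with $z \in Z(G)$. Two such elements $x^k z$ and $x^l z'$ commute, because powers of $x$ commute with one another and $z, z'$ are central. Hence $G$ is abelian, so $Z(G) = G$ has order $p^2$, contradicting $|Z(G)| = p$. Therefore $|Z(G)| = p^2$, i.e.\ $Z(G) = G$, and $G$ is abelian.

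The main obstacle is Step 1, the nontriviality of the center. It depends on the class equation together with the orbit--stabilizer identity $|\mathrm{cl}(g)| = [G:C_G(g)]$, and that identity is not stated earlier in the paper. I would cite it from \cite{MR1129886}. Once Step 1 is in place, Step 2 is routine.
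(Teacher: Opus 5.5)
Your proof is correct; the paper gives no proof of its own and only cites Artin, whose argument uses the same class-equation step to get $|Z(G)|\ge p$. The only difference is the last step: Artin observes that the centralizer of an element $x\notin Z(G)$ contains both $Z(G)$ and $x$, so its order exceeds $p$ and must equal $p^2$, which forces $x$ to be central, a contradiction; you instead use that $G/Z(G)$ cyclic forces $G$ to be abelian, and either finish works.
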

    The next proposition is an immediate consequence of the Sylow theorems. For a detailed study, see $\S 4.5$ of \cite{MR2286236}.
    \begin{proposition}[\S 4.5, \cite{MR2286236}*{pp. 139}]\label{group_of_order_2p}
        Let $p$ be a prime number and $G$ be a group of order $2p$. Then $G$ is isomorphic either to the cyclic group $\mathbb{Z}_{2p}$ or to the dihedral group $D_p$.
    \end{proposition}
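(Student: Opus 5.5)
The plan is the standard Sylow argument, splitting off the case $p=2$ first. If $p = 2$, then $|G| = 4 = p^2$, and Proposition \ref{p^2_is_abelian} shows that $G$ is abelian. An abelian group of order $4$ is either $\mathbb{Z}_4$ or $\mathbb{Z}_2 \times \mathbb{Z}_2$. The first is cyclic of order $2p$. The second is the Klein four-group, which is $D_2$, the symmetries of a degenerate $2$-gon. So the statement holds for $p=2$ once $D_2$ is read this way. From now on assume $p$ is odd.

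\textbf{Step 1: the normal Sylow $p$-subgroup.} By Sylow's theorems the number $n_p$ of Sylow $p$-subgroups divides $2$ and satisfies $n_p \equiv 1 \pmod p$. Since $p \geq 3$, this forces $n_p = 1$. So there is a unique, hence normal, subgroup $P$ of order $p$. By the Proposition preceding Corollary \ref{prime_order_is_abelian}, $P = \langle r \rangle$ is cyclic of order $p$.

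\textbf{Step 2: an element of order $2$ and its action.} By Cauchy's theorem there is an element $s$ of order $2$, and $s \notin P$ because $p$ is odd. Hence $G = P \cup sP$, so every element has the form $r^i$ or $s r^i$. Since $P$ is normal, $srs^{-1} = r^k$ for some $k$. Conjugating twice gives $r = s^2 r s^{-2} = r^{k^2}$, so $k^2 \equiv 1 \pmod p$. Because $\mathbb{Z}_p$ is a field, this leaves only $k \equiv \pm 1 \pmod p$.

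\textbf{Step 3: the two cases.} If $k \equiv 1$, then $r$ and $s$ commute. Their orders $p$ and $2$ are coprime, so $rs$ has order $2p$ and $G \cong \mathbb{Z}_{2p}$. If $k \equiv -1$, then $G = \langle r, s \mid r^p = s^2 = 1,\ srs^{-1} = r^{-1}\rangle$, since $G$ satisfies these relations and has order $2p$. The group defined by this presentation has order at most $2p$, so $G$ is isomorphic to it, and that group is $D_p$.

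The main obstacle is Step 2, specifically the restriction $k \equiv \pm 1$. It relies on the square roots of $1$ in $\mathbb{Z}_p^\times$ being exactly $\pm 1$, which uses the primality of $p$. A smaller point in Step 3 is justifying that the presentation defines a group of order exactly $2p$. This is handled by the normal-form count $\{r^i, sr^i\}$ together with the existence of the concrete dihedral group of that order.
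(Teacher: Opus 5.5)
Your proof is correct and is the standard Sylow-theorem argument. The paper gives no proof of its own; it only calls the result an immediate consequence of the Sylow theorems and cites Dummit--Foote \S 4.5, so your approach is essentially the one it relies on. Your separate treatment of $p=2$, reading $D_2$ as the Klein four-group, is a worthwhile addition: the step $n_p=1$ genuinely needs $p$ odd, and that reading of $D_2$ is consistent with the paper's own counts for $l=2$.
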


    Some well known lemmas are listed below and play a pivotal role in our study.
    \begin{lemma}\label{automorphism_characterization_of_D_l}
        If $l \geq 3$, then $\mathrm{Aut}(D_{l}) = \{f_{a,b} : 0 \leq a \leq l - 1, 1 \leq b \leq l - 1, \gcd(b, l) = 1\}$ where $f_{a,b}: D_{l} \to D_{l}$ is the group homomorphism defined by $f_{a, b}(r) = r^b$, $f_{a,b}(s) = r^as$.
    \end{lemma}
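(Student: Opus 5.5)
We prove the lemma in three steps. First, each $f_{a,b}$ is a well-defined homomorphism. Second, each such $f_{a,b}$ is bijective. Third, every automorphism has this form, with the pairs $(a,b)$ giving distinct maps. We use the presentation $D_l = \langle r, s \mid r^l = s^2 = e,\ srs^{-1} = r^{-1}\rangle$, whose elements are $r^k$ and $r^k s$ for $0 \le k \le l-1$, so $|D_l| = 2l$.

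\emph{Well-definedness.} By von Dyck's theorem, the assignment $r \mapsto r^b$, $s \mapsto r^a s$ extends to a homomorphism exactly when the images satisfy the defining relations. We check each one.
\begin{align*}
(r^b)^l &= (r^l)^b = e,\\
(r^a s)^2 &= r^a (s r^a s^{-1}) = r^a r^{-a} = e,\\
(r^a s)\, r^b\, (r^a s)^{-1} &= r^a (s r^b s^{-1}) r^{-a} = r^a r^{-b} r^{-a} = (r^b)^{-1}.
\end{align*}
Hence $f_{a,b}$ is a homomorphism for every integer $a$ and $b$.

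\emph{Bijectivity.} When $\gcd(b,l)=1$, the element $r^b$ generates $\langle r\rangle$, so the image of $f_{a,b}$ contains $r$. Since the image also contains $r^a s$, it contains $s = r^{-a}(r^a s)$. Thus $f_{a,b}$ is surjective, and because $D_l$ is finite it is bijective.

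\emph{Every automorphism has this form.} Let $\varphi \in \Aut(D_l)$; this step is the main point of the proof. Since $\varphi$ preserves orders, $\varphi(r)$ must be an element of order $l$. For $l \ge 3$, every reflection $r^k s$ has order $2 < l$, so $\varphi(r)$ lies in $\langle r\rangle$. Writing $\varphi(r) = r^b$, the condition $\mathrm{ord}(r^b) = l$ forces $\gcd(b,l) = 1$, and we may take $1 \le b \le l-1$.

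Next, $\varphi(s) \notin \langle r\rangle$. Otherwise $\varphi(D_l) = \langle \varphi(r), \varphi(s)\rangle$ would lie in $\langle r\rangle$, a subgroup of order $l < 2l$, contradicting surjectivity. Hence $\varphi(s) = r^a s$ for a unique $a$ with $0 \le a \le l-1$.

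Because $\varphi$ and $f_{a,b}$ agree on the generators $r$ and $s$, we conclude $\varphi = f_{a,b}$.

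\emph{Distinctness.} Different pairs $(a,b)$ in the stated ranges give different images of $r$ or of $s$. So the automorphisms $f_{a,b}$ are pairwise distinct, and $|\Aut(D_l)| = l\,\phi(l)$.

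The hypothesis $l \ge 3$ is used only to separate $\varphi(r)$ from the reflections. For $l = 2$ the group $D_2 \cong \mathbb{Z}_2 \times \mathbb{Z}_2$ has $|\Aut(D_2)| = 6$, which is larger than the $2 = l\,\phi(l)$ maps described by the lemma.
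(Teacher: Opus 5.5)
Your proof is correct and complete. The paper gives no proof of this lemma to compare against: it lists it among the ``well known lemmas'' and uses it without argument.

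What you wrote is the standard proof. You check the defining relations via von Dyck's theorem, get surjectivity from $\gcd(b,l)=1$, and then use element orders together with surjectivity to force $\varphi(r)=r^b$ and $\varphi(s)=r^as$. It also gives more than the bare statement: it yields $|\mathrm{Aut}(D_l)| = l\,\phi(l)$ and shows exactly where $l\ge 3$ is needed, with $D_2$ as the counterexample otherwise.

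You handled the one delicate point correctly. For even $l$ the rotation $r^{l/2}$ has order $2$, so orders alone cannot keep $\varphi(s)$ out of $\langle r\rangle$. Your argument that the image would otherwise lie in $\langle r\rangle$ is what closes that gap.

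If you compare this with the paper's later use of the lemma (proof of Lemma~\ref{identity_greater_other}), note that the paper silently swaps the parameters there. It writes $\varphi_{a,b}(r)=r^a$ and $\varphi_{a,b}(s)=r^bs$, so your $f_{a,b}$ corresponds to its $\varphi_{b,a}$.
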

        

    \begin{lemma}\label{gcd_lcm_inequality}
        Let $x_1, x_2$ be positive integers. Then $x_1 + x_2 \le \mathrm{lcm}(x_1,x_2) + \gcd(x_1,x_2)$.
    \end{lemma}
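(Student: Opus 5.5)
The plan is to use the identity $\gcd(x_1,x_2)\cdot \mathrm{lcm}(x_1,x_2) = x_1x_2$ and reduce the claim to a product of two nonnegative factors. Write $d = \gcd(x_1,x_2)$ and $L = \mathrm{lcm}(x_1,x_2)$, so $dL = x_1 x_2$. I will show that
\begin{align*}
    L + d - x_1 - x_2 = \frac{(x_1 - d)(x_2 - d)}{d}.
\end{align*}
Indeed, multiplying the right-hand side by $d$ gives $x_1x_2 - d x_1 - d x_2 + d^2 = dL + d^2 - dx_1 - dx_2$. This is $d$ times the left-hand side, and since $d>0$ we may divide through by $d$.

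Next I will argue that each factor in the numerator is nonnegative. Since $d$ divides both $x_1$ and $x_2$, and both are positive, we have $d \le x_1$ and $d \le x_2$. Hence $(x_1-d)(x_2-d) \ge 0$. Dividing by $d>0$ preserves the sign, which gives $L + d - x_1 - x_2 \ge 0$. This is exactly the inequality in Lemma~\ref{gcd_lcm_inequality}.

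There is no real obstacle here. The only point needing care is the identity $dL = x_1x_2$ for positive integers, which is standard (for example, by comparing prime exponents: $\min + \max = $ sum). The same factorization also shows that equality holds precisely when $d = x_1$ or $d = x_2$, that is, when one of the two integers divides the other. This equality case may be useful later when the lemma is applied to parameters of automorphisms $f_{a,b}$ of $D_l$.
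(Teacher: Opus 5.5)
Your proof is correct and is essentially the paper's argument. The paper writes $x_1 = da_1$, $x_2 = da_2$ with $\gcd(a_1,a_2)=1$ and $\mathrm{lcm}(x_1,x_2) = da_1a_2$, which reduces the claim to $(a_1-1)(a_2-1) \ge 0$; your factorization $(x_1-d)(x_2-d)/d = d(a_1-1)(a_2-1)$ is the same identity without dividing out $d$, and $dL = x_1x_2$ plays the role of $\mathrm{lcm} = da_1a_2$.
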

    \begin{proof}
        Write $x_1 = da_1$ and $x_2 = da_2$, where $d = \gcd(x_1, x_2)$ and $\gcd(a_1, a_2) = 1$. Then $\mathrm{lcm}(x_1, x_2) = da_1a_2$. The inequality becomes 
        \begin{align*}
            da_1 + da_2 \leq da_1a_2 + d \iff a_1 + a_2 \leq a_1a_2 + 1
        \end{align*}
        Since $a_1, a_2$ are positive integers, we have $(a_1 - 1)(a_2 -1) \geq 0$, which is equivalent to $a_1 + a_2 \leq a_1a_2 + 1$. This proves the inequality.
    \end{proof}
    \begin{proposition}\label{gcd_main_supporting_result}
        Let $x_1, x_2, x_3$ be positive integers. Then $\gcd (x_1, x_2) + \gcd (x_3, x_2) \leq x_2 + \gcd (x_1,x_3)$.
    \end{proposition}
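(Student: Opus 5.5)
We plan to reduce the statement to Lemma~\ref{gcd_lcm_inequality}, applied to the two numbers $\gcd(x_1,x_2)$ and $\gcd(x_3,x_2)$.

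Set $u = \gcd(x_1,x_2)$ and $v = \gcd(x_3,x_2)$. Both are positive integers, so Lemma~\ref{gcd_lcm_inequality} gives
\begin{align*}
    \gcd(x_1,x_2) + \gcd(x_3,x_2) = u + v \le \mathrm{lcm}(u,v) + \gcd(u,v).
\end{align*}
It then suffices to bound each term on the right.

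For the lcm term, $u$ and $v$ both divide $x_2$. Hence $\mathrm{lcm}(u,v)$ divides $x_2$, and since $x_2>0$ this gives $\mathrm{lcm}(u,v)\le x_2$.

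For the gcd term, $\gcd(u,v)$ divides $u$ and $u$ divides $x_1$. Likewise $\gcd(u,v)$ divides $v$ and $v$ divides $x_3$. So $\gcd(u,v)$ is a common divisor of $x_1$ and $x_3$. It therefore divides $\gcd(x_1,x_3)$, and in particular $\gcd(u,v)\le \gcd(x_1,x_3)$. (In fact $\gcd(u,v)=\gcd(x_1,x_2,x_3)$, but only the inequality is needed.)

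Adding the two bounds yields $\gcd(x_1,x_2)+\gcd(x_3,x_2)\le x_2+\gcd(x_1,x_3)$, which is the claim.

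The only step requiring any idea is choosing to apply Lemma~\ref{gcd_lcm_inequality} to $u$ and $v$ rather than to the $x_i$ themselves. After that, the argument consists of the two divisibility observations: a common multiple is bounded by the lcm, and a common divisor is bounded by the gcd. I do not expect any real obstacle.
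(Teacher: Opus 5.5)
Your proof is correct and is essentially the paper's argument: apply Lemma~\ref{gcd_lcm_inequality} to $\gcd(x_1,x_2)$ and $\gcd(x_3,x_2)$, bound their lcm by $x_2$ since $x_2$ is a common multiple, and bound their gcd by $\gcd(x_1,x_3)$ since it is a common divisor of $x_1$ and $x_3$.
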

    \begin{proof}
        Let $g_1 = \gcd(x_1,x_2),\ g_2 = \gcd(x_3,x_2),\ g_3 = \gcd(x_1,x_3)$, then $g_1 \mid x_2$ and $g_2 \mid x_2$ and there exists positive integers $u_1, u_2$ such that $x_2 = g_1u_1$ and $x_2 = g_2u_2$ which implies that $x_2$ is common multiple of $g_1, g_2$, so that
        \begin{equation}\label{eq1}
            \mathrm{lcm}(g_1,g_2) \leq x_2.
        \end{equation}
        Moreover, any common divisor of $g_1$ and $g_2$ divides both $x_1$ and $x_3$. Thus,
        \begin{equation}\label{eq2}
            \gcd(g_1,g_2) \le g_3.
        \end{equation}
        Now, on applying the Lemma \ref{gcd_lcm_inequality}, to $g_1, g_2$, we obtain
        \begin{align*}
            g_1 + g_2 \le \mathrm{lcm}(g_1,g_2) + \gcd(g_1,g_2).
        \end{align*}
        Finally, on using \eqref{eq1} and \eqref{eq2}, we get $g_1 + g_2 \le b + g_3.$, i.e., $\gcd (x_1, x_2) + \gcd (x_3, x_2) \leq x_2 + \gcd (x_1,x_3)$. This completes the proof.
    \end{proof}
    \begin{proposition}\label{gcd_secondary_supporting_result}
        Let $x_1$ be a natural number. Then
        \begin{align*}
            \gcd(x_1-1,x_1+1)=
            \begin{cases}
                2, & \text{if $x_1$ is odd},\\[4pt]
                1, & \text{if $x_1$ is even}.
            \end{cases}
        \end{align*}
    \end{proposition}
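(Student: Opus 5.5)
The plan is to compute $d=\gcd(x_1-1,x_1+1)$ from the fact that a common divisor of two integers divides their difference, and then settle which divisor of $2$ occurs by a parity split on $x_1$. (For $x_1=1$ the first argument is $0$; then $\gcd(0,2)=2$, which is consistent with the odd case, so no separate treatment is needed.)

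First, $d$ divides both $x_1+1$ and $x_1-1$. Hence it divides their difference
\begin{align*}
(x_1+1)-(x_1-1)=2,
\end{align*}
and so $d\in\{1,2\}$.

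Now split on the parity of $x_1$.
\begin{itemize}
\item If $x_1$ is odd, then $x_1-1$ and $x_1+1$ are both even. Thus $2$ is a common divisor, and combined with $d\mid 2$ this forces $d=2$.
\item If $x_1$ is even, then $x_1+1$ is odd. So $2\nmid x_1+1$, which rules out $d=2$, and therefore $d=1$.
\end{itemize}

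There is no real obstacle here. The only point needing care is the degenerate value $x_1=1$, where one argument of the gcd is $0$, and we have already checked that it agrees with the odd case.
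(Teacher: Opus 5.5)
Your proof is correct and takes essentially the same approach as the paper: the paper reduces $\gcd(x_1-1,x_1+1)$ to $\gcd(x_1-1,2)$ by subtracting the two arguments and then checks the parity of $x_1-1$, while you use the equivalent fact that the gcd divides the difference $2$ and then split on the parity of $x_1$. Your explicit check of the degenerate value $x_1=1$ is a small extra that the paper leaves implicit.
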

    \begin{proof}
        Since $\gcd(x_1-1,x_1+1)=\gcd\bigl(x_1-1,(x_1+1)-(x_1-1)\bigr)=\gcd(x_1-1,2)$, so $\gcd(x_1-1,x_1+1)$ is either $1$ or $2$. If $x_1$ is even, then $x_1 - 1$ is odd, therefore $\gcd(x_1-1,2) = 1$. If $x_1$ is odd, then $x_1-1$ is even, so $\gcd(x_1 - 1, 2) = 2$. This completes our argument.
    \end{proof}
    \begin{definition}
        Let $\alpha$ be an automorphism of $G$, an element $x$ of $G$ is called an \emph{involution} of $\alpha$ if $\alpha(x) = x^{-1}$.
    \end{definition}
    Henceforth, unless stated otherwise, we denote $m_{\alpha}$ by the number of involutions of the automorphism $\alpha$ corresponding to the group $G$. 
    \begin{proposition}[\S 5.3, \cite{MR450380}*{pp. 36}]\label{number_of_irrep_equals_conjugacy_classes}
        Let $G$ be a finite group. Then the number of conjugacy classes of a group coincides with the number of equivalence classes of its irreducible representations.
    \end{proposition}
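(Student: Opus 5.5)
The statement is the classical fact that the number of conjugacy classes of $G$ equals the number of equivalence classes of irreducible complex representations. The plan is to prove it by comparing two descriptions of the space of class functions $\mathcal{C}(G)=\{f\colon G\to\mathbb{C} \mid f(hgh^{-1})=f(g) \text{ for all } g,h\in G\}$. First, $\dim \mathcal{C}(G)$ equals the number $k$ of conjugacy classes, since the indicator functions of the classes form a basis. It therefore suffices to show that the irreducible characters $\chi_1,\dots,\chi_r$ form a basis of $\mathcal{C}(G)$. Then $r=k$, and since characters determine representations up to equivalence, $r$ is the number of equivalence classes.

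Equip $\mathcal{C}(G)$ with the inner product $\langle f_1,f_2\rangle=\frac{1}{|G|}\sum_{g}f_1(g)\overline{f_2(g)}$. The first step is to establish Schur orthogonality, $\langle\chi_i,\chi_j\rangle=\delta_{ij}$. To do this, apply Schur's lemma to the averaged operator $\frac{1}{|G|}\sum_g \rho_i(g)\,A\,\rho_j(g^{-1})$ for an arbitrary linear map $A$, and then take traces. Orthonormality gives linear independence, so $r\le k$.

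The main step is spanning, which I expect to be the main obstacle. Suppose $f\in\mathcal{C}(G)$ is orthogonal to every $\chi_i$. For each irreducible $(\rho,V)$, consider the operator $\rho_f=\sum_g \overline{f(g)}\rho(g)$. Because $f$ is a class function, $\rho_f$ commutes with every $\rho(h)$, so by Schur's lemma it is a scalar $\lambda$. Taking traces gives $\lambda\dim V=|G|\,\langle \chi_\rho,f\rangle^{-}=0$, so $\rho_f=0$ on every irreducible representation. By Maschke's theorem, $\rho_f=0$ on every representation, in particular on the regular representation. Applying it to the basis vector $e_1$ of $\mathbb{C}[G]$ yields $\sum_g\overline{f(g)}e_g=0$, hence $f=0$. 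Thus the characters span, so $r=k$.

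The delicate points are the commuting computation and the use of complete reducibility (Maschke). Both rest on averaging over the finite group, which is valid over $\mathbb{C}$ because $|G|$ is invertible.
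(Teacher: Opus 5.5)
Your argument is correct. The paper gives no proof of its own and simply quotes the result from Serre, who proves it exactly as you do: orthonormality of the irreducible characters, then the operator $\rho_f$ on the regular representation to get spanning. That argument is in Serre's \S 2.5 (Theorems 6--7), not the \S 5.3 on dihedral groups that the paper cites. One cosmetic slip: $\mathrm{tr}\,\rho_f=\sum_g\overline{f(g)}\chi_\rho(g)=|G|\langle\chi_\rho,f\rangle$ with no extra conjugation, and it vanishes either way.
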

    \begin{proposition}[Corollary 4.1.8, \cite{MR2867444}*{pp. 29}]\label{sum_of_irreducibles_of_abelian_group}
        All irreducible representations of a finite abelian group have degree one.
    \end{proposition}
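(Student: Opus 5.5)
The statement is the standard fact that every irreducible complex representation of a finite abelian group has degree one, i.e. it is a linear character. The plan is a commuting-operators/Schur's lemma argument, followed (if one wants the counting consequence used later) by the degree-sum identity.

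\textbf{Setup.} Let $G$ be finite abelian and let $(\pi,V)$ be an irreducible complex representation of $G$. Since $G$ is finite, $V$ is finite dimensional: it is spanned by the orbit $\pi(G)v$ of any nonzero vector $v$, and that span is a nonzero invariant subspace, hence equal to $V$.

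\textbf{Key step.} Fix $h\in G$. Because $G$ is abelian, $\pi(h)\pi(g)=\pi(hg)=\pi(gh)=\pi(g)\pi(h)$ for all $g\in G$, so $\pi(h)$ is a $G$-intertwiner $V\to V$. Since $\mathbb{C}$ is algebraically closed and $V$ is finite dimensional, $\pi(h)$ has an eigenvalue $\lambda_h$. The eigenspace $\ker(\pi(h)-\lambda_h I)$ is nonzero, and it is $G$-invariant because $\pi(h)$ commutes with every $\pi(g)$. By irreducibility this eigenspace is all of $V$, so $\pi(h)=\lambda_h I$; this is exactly Schur's lemma.

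\textbf{Conclusion.} Every $\pi(h)$ is therefore a scalar, so every subspace of $V$ is $G$-invariant. In particular, any line $\mathbb{C}v$ with $v\neq 0$ is a nonzero invariant subspace. Irreducibility then forces $V=\mathbb{C}v$, so $\dim V=1$.

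\textbf{Counting consequence.} For the paper's purposes one also wants $T(G)=|G|$. By Proposition \ref{number_of_irrep_equals_conjugacy_classes}, the number of irreducible representations of $G$ equals its number of conjugacy classes. In an abelian group every conjugacy class is a singleton, so there are $|G|$ classes, and hence $T(G)=|G|\cdot 1=|G|$.

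\textbf{Main obstacle.} The only delicate point is the existence of an eigenvalue, which needs both finite dimensionality and working over $\mathbb{C}$. Both are guaranteed here, so I expect no real difficulty.
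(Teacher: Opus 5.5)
Your proof is correct. The paper gives no argument of its own and simply cites this fact; your Schur's-lemma argument (each $\pi(h)$ commutes with every $\pi(g)$, hence is a scalar, so every line is invariant and $\dim V=1$) is the standard proof of it. Your extra step $T(G)=|G|$ is also right, and it is how the paper combines this fact with Proposition \ref{number_of_irrep_equals_conjugacy_classes} in Lemma \ref{abelian_final_result}.
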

    \begin{proposition}[\S 5.3, \cite{MR450380}*{pp. 36}]\label{degrres_irr_of_dn}
        Let $D_l$ be the dihedral group of order $2l$. Then 
        \begin{enumerate}
            \item If $l = 2k$, then $D_l$ has four 1-degree complex irreducible representations and $(k - 1)$ 2-degree complex irreducible representations.
            \item If $l = 2k + 1$, then $D_l$ has two 1-degree complex irreducible representations and $k$ 2-degree complex irreducible representations.
        \end{enumerate}
    \end{proposition}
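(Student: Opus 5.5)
The statement is a cited standard fact. We would prove it by first listing the conjugacy classes of $D_l$ and then exhibiting enough irreducible representations to exhaust them. Present $D_l=\langle r,s \mid r^l=s^2=e,\ srs^{-1}=r^{-1}\rangle$. The conjugacy classes are computed directly from the relation $s r^j s^{-1}=r^{-j}$ together with $r^i (r^j s) r^{-i}=r^{j+2i}s$.

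\emph{Case $l=2k+1$.} The rotations give the classes $\{e\}$ and $\{r^j,r^{-j}\}$ for $1\le j\le k$. Since $2$ is invertible modulo $l$, all reflections $r^js$ form a single class. This gives $k+2$ classes in total.

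\emph{Case $l=2k$.} The rotations give the classes $\{e\}$, $\{r^k\}$ and $\{r^j,r^{-j}\}$ for $1\le j\le k-1$. Since $j+2i$ preserves the parity of $j$, the reflections split into two classes, $\{r^{2i}s\}$ and $\{r^{2i+1}s\}$. This gives $k+3$ classes in total.

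By Proposition~\ref{number_of_irrep_equals_conjugacy_classes}, the number of irreducible representations is $k+2$ when $l$ is odd and $k+3$ when $l$ is even.

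Next we construct representations. The one-dimensional representations factor through the abelianization $D_l/[D_l,D_l]$. The commutator subgroup is $\langle r^2\rangle$, which equals $\langle r\rangle$ when $l$ is odd and has index $2$ in $\langle r\rangle$ when $l$ is even. So the abelianization has order $2$ or $4$, and we get exactly $2$ or $4$ one-dimensional representations. Explicitly, they are given by $r\mapsto \pm1$, $s\mapsto\pm1$, with $r\mapsto -1$ allowed only when $l$ is even.

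For the two-dimensional ones, set $\omega=e^{2\pi i/l}$ and define
$$\rho_j(r)=\begin{pmatrix}\omega^j&0\\0&\omega^{-j}\end{pmatrix},\qquad \rho_j(s)=\begin{pmatrix}0&1\\1&0\end{pmatrix},$$
and check that these satisfy the defining relations. The representation $\rho_j$ is irreducible exactly when $\omega^j\ne\omega^{-j}$. In that case the only lines invariant under $\rho_j(r)$ are the two coordinate axes, and $\rho_j(s)$ swaps them. Moreover $\rho_j\cong\rho_{l-j}$. The characters $\chi_j(r^m)=2\cos(2\pi jm/l)$ are pairwise distinct for $1\le j<l/2$, so these $\rho_j$ are pairwise non-isomorphic. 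There are $k$ of them for $l=2k+1$ and $k-1$ of them for $l=2k$.

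It remains to show the list is complete. The counts match: $2+k$ representations for odd $l$ and $4+(k-1)=k+3$ for even $l$, which equals the number of conjugacy classes in each case, so by Proposition~\ref{number_of_irrep_equals_conjugacy_classes} no others exist. As a cross-check, the squared degrees sum to $2+4k=2l$ and $4+4(k-1)=2l$ respectively. The main obstacle is keeping track of the parity case in the reflection classes and in the abelianization, since that parity is exactly what changes the count from $2$ to $4$ one-dimensional representations.
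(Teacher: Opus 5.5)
Your argument is correct and complete. The paper gives no proof of its own and only cites Serre, \S 5.3, and what you wrote is essentially that standard argument: the same matrices $\rho_j$, irreducibility because $\rho_j(s)$ swaps the only two $\rho_j(r)$-stable lines, the explicit one-dimensional characters, and completeness by a counting argument (your class count, with $\sum n_i^2 = 2l$ as a cross-check). The one step you leave implicit is that conjugation by reflections also preserves the parity of $j$, since $(r^is)(r^js)(r^is)^{-1}=r^{2i-j}s$; this is immediate.
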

    An important consequence of Proposition \ref{degrres_irr_of_dn}, is that it provides the exact sum of the degrees of all irreducible complex representations of dihedral groups.
    \begin{corollary}\label{sum_of_irrep_dimensions}
        The sum of degrees of complex irreducible representations of $D_l$ is $l + 2$ when $l$ is even, and $l + 1$ when $l$ is odd.
    \end{corollary}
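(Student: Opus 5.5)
The corollary follows by summing degrees from Proposition \ref{degrres_irr_of_dn}, treating the two parities of $l$ separately. No further character theory is needed.

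If $l = 2k$ is even, Proposition \ref{degrres_irr_of_dn}(1) gives four irreducible representations of degree $1$ and $k-1$ of degree $2$. Hence
\begin{align*}
T(D_l) = 4\cdot 1 + (k-1)\cdot 2 = 2k + 2 = l + 2 .
\end{align*}
If $l = 2k+1$ is odd, Proposition \ref{degrres_irr_of_dn}(2) gives two representations of degree $1$ and $k$ of degree $2$. Hence
\begin{align*}
T(D_l) = 2 + 2k = l + 1 .
\end{align*}
This is the claimed formula.

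As a consistency check, the count of irreducibles can be matched against the number of conjugacy classes via Proposition \ref{number_of_irrep_equals_conjugacy_classes}. For $l$ even there are $k+3$ classes, namely $\{e\}$, $\{r^k\}$, the $k-1$ pairs $\{r^{\pm i}\}$, and two classes of reflections. For $l$ odd there are $k+2$ classes, namely $\{e\}$, the $k$ pairs $\{r^{\pm i}\}$, and one class of reflections. One can also confirm $\sum (\deg)^2 = 2l$: for $l$ even, $4 + 4(k-1) = 4k = 2l$; for $l$ odd, $2 + 4k = 2l$.

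There is no real obstacle here. The only care needed is the bookkeeping $l = 2k$ versus $l = 2k+1$ when converting the counts into expressions in $l$.
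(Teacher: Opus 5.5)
Your proposal is correct and matches the paper's approach: the paper presents this corollary as an immediate consequence of Proposition~\ref{degrres_irr_of_dn}, obtained by summing the degrees separately for $l = 2k$ and $l = 2k+1$, which is exactly your computation. Your conjugacy-class and $\sum(\deg)^2 = 2l$ checks are correct but not needed for the proof.
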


    \section{Main Results}
        While studying the groups of order $p, 2p, p^2$. The only situation in which a non-abelian group can appear is when the order is $2p$, in that case, the group is isomorphic to the dihedral group $D_p$. We start our analysis on the involutions of identity automorphism and its relation with sum of degrees of all irreducible complex representations and proceed to other automorphisms. Since the examination for finite abelian groups becomes easy to due to Proposition \ref{sum_of_irreducibles_of_abelian_group}.
        
         \begin{lemma}\label{identity_involutions}
            The identity automorphism of dihedral group $D_l$ has exactly $l + 2$ involutions when $l$ is even, and $l + 1$ involutions when $l$ is odd.
        \end{lemma}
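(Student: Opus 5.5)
The plan is to count directly the set $S_e = \{g \in D_l \mid g = g^{-1}\} = \{g \in D_l \mid g^2 = e\}$, which includes the identity. I will use the standard presentation $D_l = \langle r, s \mid r^l = s^2 = e,\ srs^{-1} = r^{-1}\rangle$, as in Lemma \ref{automorphism_characterization_of_D_l}. Every element is then uniquely of the form $r^i$ or $r^i s$ with $0 \le i \le l-1$. So the count splits into two disjoint pieces: rotations with $g^2 = e$, and reflections with $g^2 = e$.

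\textbf{Reflections.} For $g = r^i s$, the relation $s r^i s^{-1} = r^{-i}$ gives
\[
(r^i s)^2 = r^i (s r^i s^{-1}) s^2 = r^i r^{-i} = e.
\]
Hence all $l$ elements $r^i s$ lie in $S_e$, and this holds for every $l$ with no case distinction.

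\textbf{Rotations.} For $g = r^i$ with $0 \le i \le l-1$, the condition $r^{2i} = e$ is equivalent to $l \mid 2i$. Since $0 \le 2i \le 2l-2$, this forces $2i \in \{0, l\}$.
\begin{itemize}
\item If $l$ is odd, only $i = 0$ works, giving one element.
\item If $l$ is even, both $i = 0$ and $i = l/2$ work, giving two elements.
\end{itemize}

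Adding the two counts gives $m_e = l + 2$ for even $l$ and $m_e = l + 1$ for odd $l$. This matches Corollary \ref{sum_of_irrep_dimensions}, which can be noted as the remark that $T(D_l) = m_e$.

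There is no real obstacle here. The only step needing care is the rotation count, where one must use that $r$ has order exactly $l$ and that $0 \le 2i < 2l$. I would also check that the paper's convention counts the identity as an involution, since $m_e$ must include $e$ for the totals to match.
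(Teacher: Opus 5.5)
Your proof is correct and follows the same route as the paper. You split $D_l$ into the $l$ reflections $r^i s$, all of which satisfy $g^2 = e$, and the rotations $r^i$, where $2i \equiv 0 \pmod{l}$ has one solution for odd $l$ and two ($i = 0, l/2$) for even $l$. Your explicit check $(r^is)^2 = e$ and the bound $0 \le 2i \le 2l-2$ fill in details the paper leaves implicit, and the identity is indeed counted, since the paper defines an involution of $\alpha$ by $\alpha(x) = x^{-1}$.
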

        \begin{proof}
            Since every element of $D_l$ can be expressed uniquely in one of the forms $r^k$ or $r^ks$, where $0 \le k < l$. We first determine when such elements are involutions. Suppose the rotation $r^k$ for some $k$, is an involution, then $k$ satisfies $2k \equiv 0 \pmod{l}$. There are two possible cases: $l$ is even or $l$ is odd.
            
            \begin{enumerate}
                \item[Case (i):]  $l$ is even, then the congruence has two solutions, precisely $k = 0, \frac{l}{2}$.
                \item[Case (ii):] $ l$ is odd, then the congruence has a unique solution, i.e., $k = 0$.
            \end{enumerate}
            Since, every reflection is an element of order $2$, so that, there are precisely $l$ reflections in $D_l$. Therefore the total number of involutions of identity automorphism of $D_l$ are
            \begin{align*}
                \begin{cases}
                    l + 2, & \text{if $l$ is even},\\[4pt]
                    l + 1, & \text{if $l$ is odd}.
                \end{cases}
            \end{align*}
        \end{proof}
        Combining Corollary \ref{sum_of_irrep_dimensions}, with Lemma \ref{identity_involutions}, yields the following lemma.
        \begin{lemma}\label{sum_of_irrep_equal_to_identity_in_Dn}
            Let $e$ denote the identity automorphism of $D_l$. Then $T(D_l) = m_e$
        \end{lemma}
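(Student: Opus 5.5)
The plan is to compare the two quantities case by case, since both have already been computed separately earlier in the paper. By Corollary \ref{sum_of_irrep_dimensions}, $T(D_l)$ equals $l+2$ when $l$ is even and $l+1$ when $l$ is odd. By Lemma \ref{identity_involutions}, the number $m_e$ of involutions of the identity automorphism, meaning the elements $g$ with $g = g^{-1}$, is likewise $l+2$ when $l$ is even and $l+1$ when $l$ is odd. The proof therefore splits according to the parity of $l$, and in each case the two values coincide. This gives $T(D_l)=m_e$.

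It is worth briefly checking that the hypotheses of the two cited results agree. The corollary rests on Proposition \ref{degrres_irr_of_dn}:
\begin{itemize}
\item for $l=2k$ we get $4\cdot 1+(k-1)\cdot 2 = 2k+2 = l+2$;
\item for $l=2k+1$ we get $2\cdot 1 + k\cdot 2 = 2k+2 = l+1$.
\end{itemize}
The lemma counts the rotations $r^k$ with $2k\equiv 0 \pmod l$ (two of them when $l$ is even, one when $l$ is odd) together with all $l$ reflections $r^ks$, each of which is its own inverse. Both results use the same convention that $D_l$ has order $2l$, so no reindexing between them is needed.

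There is essentially no obstacle here. The only point needing care is the small cases $l=1,2$, where $D_l$ is abelian. Proposition \ref{degrres_irr_of_dn} with $k=0$ (for $l=1$) or $k=1$ (for $l=2$) still gives the correct degree sums $2$ and $4$, since in these cases every irreducible representation is one-dimensional by Proposition \ref{sum_of_irreducibles_of_abelian_group}. Likewise, every element is an involution in the sense of the Definition: both elements when $l=1$, and all four when $l=2$. So the equality also holds at the boundary, and the lemma follows by direct substitution.
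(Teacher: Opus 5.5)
Your proposal is correct and takes the same approach as the paper, which proves this lemma by combining Corollary \ref{sum_of_irrep_dimensions} with Lemma \ref{identity_involutions} and matching the two parity cases. Your arithmetic check via Proposition \ref{degrres_irr_of_dn} and your treatment of the abelian boundary cases $l=1,2$ are correct additions that the paper does not spell out.
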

        On using Proposition \ref{sum_of_irreducibles_of_abelian_group}, and Lemma \ref{sum_of_irrep_equal_to_identity_in_Dn}, our attention is reduced to the non-identity automorphisms of dihedral groups. Now, we state and prove the some necessary results that pave a path to prove our main result.
       \begin{lemma}\label{identity_greater_other}
            Let $\sigma$ be an automorphism of $D_l$ and $e$ denote the identity automorphism of $D_l$. Then $m_e \geq m_{\sigma}$.
        \end{lemma}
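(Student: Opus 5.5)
The plan is to count $m_\sigma$ explicitly for an arbitrary automorphism and then bound it by $m_e$, which Lemma \ref{identity_involutions} gives as $l+2$ or $l+1$. By Lemma \ref{automorphism_characterization_of_D_l} (for $l\ge 3$; the cases $l=1,2$ are abelian and can be checked by hand), write $\sigma=f_{a,b}$ with $\gcd(b,l)=1$. We then split $D_l$ into rotations $r^k$ and reflections $r^ks$, $0\le k<l$, and count the solutions of $\sigma(x)=x^{-1}$ in each part.

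For rotations, $\sigma(r^k)=r^{bk}$ and $(r^k)^{-1}=r^{-k}$. So the condition is $(b+1)k\equiv 0 \pmod l$, which has exactly $\gcd(b+1,l)$ solutions. For reflections, $\sigma(r^ks)=r^{bk}r^as=r^{bk+a}s$, while $(r^ks)^{-1}=r^ks$. The condition is therefore $(b-1)k\equiv -a\pmod l$. This linear congruence has either $0$ or exactly $\gcd(b-1,l)$ solutions, depending on whether $\gcd(b-1,l)\mid a$. Hence
\begin{align*}
m_{f_{a,b}}\le \gcd(b+1,l)+\gcd(b-1,l),
\end{align*}
with equality precisely when $\gcd(b-1,l)\mid a$. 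This count also gives the classification of $m_\sigma$ mentioned in the abstract.

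It remains to show the right side is at most $m_e$. The case $b=1$ must be treated separately, because $b-1=0$ and Proposition \ref{gcd_main_supporting_result} needs positive integers. In that case there are $\gcd(2,l)$ rotation solutions, and $l$ reflection solutions if $a=0$ and none otherwise; this is at most $l+\gcd(2,l)=m_e$.

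For $b\ge 2$, set $g_1=\gcd(b+1,l)$ and $g_2=\gcd(b-1,l)$. As in the proof of Proposition \ref{gcd_main_supporting_result}, $\mathrm{lcm}(g_1,g_2)\le l$. Moreover, $\gcd(g_1,g_2)$ divides $l$ and also divides $\gcd(b+1,b-1)$, which is at most $2$ by Proposition \ref{gcd_secondary_supporting_result}. So $\gcd(g_1,g_2)$ divides $\gcd(2,l)$. Lemma \ref{gcd_lcm_inequality} then gives
\begin{align*}
g_1+g_2\le l+\gcd(2,l).
\end{align*}
The right side equals $l+2$ for even $l$ and $l+1$ for odd $l$, which is $m_e$ in both cases.

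The step needing the most care is the odd-$l$ case. Quoting Proposition \ref{gcd_main_supporting_result} verbatim would only give the weaker bound $l+2$ when $b$ is odd. The fix is the refinement above: use that $\gcd(g_1,g_2)$ must divide the odd number $l$, hence equals $1$. For even $l$ no refinement is needed, since $b$ is coprime to $l$ and therefore odd, so $\gcd(b+1,b-1)=2$ and the bound is exactly $l+2$.
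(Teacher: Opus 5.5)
Your proof is correct and follows the same route as the paper. Both count rotation solutions of $(b+1)k\equiv 0$ and reflection solutions of $(b-1)k\equiv -a \pmod l$, bound $m_\sigma$ by $\gcd(b+1,l)+\gcd(b-1,l)$, and compare this with $m_e$ via Lemma \ref{gcd_lcm_inequality}; your labeling of $a,b$ matches Lemma \ref{automorphism_characterization_of_D_l}, while the paper's proof swaps them.

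Your two extra precautions are needed, not cosmetic. The paper only cites Propositions \ref{gcd_main_supporting_result} and \ref{gcd_secondary_supporting_result}, which give just $l+\gcd(b-1,b+1)=l+2$ when $l$ and $b$ are both odd, and which do not literally apply when $b=1$. So your separate treatment of $b=1$ and your observation that $\gcd(g_1,g_2)$ divides $\gcd(2,l)$ are what actually close the odd-$l$ case.
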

        \begin{proof}
            
             Since reflection is of the form \(r^ks \), where \(0\le k<l\). On employing Lemma \ref{automorphism_characterization_of_D_l}, we have
             \[
                \varphi_{a,b}(r^ks)=\varphi(r^k)\varphi(s) = r^{ak}\cdot r^bs  = r^{ak+b}s.
             \]
            Thus, \(r^ks\) is an involution iff \(r^{ak+b}s =  r^ks\), i.e.,
            \[
                r^{ak+b} = r^k \quad\Longleftrightarrow\quad (a-1)k \equiv -b \pmod l. \tag{1}
            \]
            Suppose $d_1=\gcd(a-1,l)$, the congruence \((a-1)k\equiv -b\pmod l\) has a solution iff \(d_1\mid b\), and has exactly \(d_1\) incongruent solutions modulo \(l\). So that, the number of reflections that are involutions equals \(d_1\) when \(d_1\mid b\), and equals \(0\) otherwise.
            And every rotation is of the form \(r^k\), where \(0\le k<l\). Then $\varphi_{a,b}(r^k)=\varphi(r)^k=r^{ak}$ and \(r^k\) is an involution iff \(r^{ak} = r^{-k}\), i.e.,
            \[
                r^{ak} = r^{-k} \quad\Longleftrightarrow\quad (a+1)k \equiv 0 \pmod l. \tag{2}
            \]
            Now, let \(d_2=\gcd(a+1,l)\), then the congruence \((a+1)k\equiv 0\pmod l\) has a solution, because \(d_2\mid 0\), and has exactly \(d_2\) incongruent solutions modulo \(l\). So that, the number of rotations that are involutions are $d_2$. Now, on summing up the maximum number of reflections that are involutions of $\varphi_{a, b}$ and maximum number of rotations that are involutions of $\varphi_{a, b}$, we obtain $\gcd(a - 1, l) + \gcd(a + 1, l)$. Therefore, the result follows from Lemma \ref{gcd_main_supporting_result}, and Lemma \ref{gcd_secondary_supporting_result}. This completes our argument.
        \end{proof}
    
        An immediate consequence of Lemma \ref{identity_greater_other}, together with Lemma \ref{sum_of_irrep_equal_to_identity_in_Dn}, is the following corollary.
        \begin{corollary}\label{main_theorem}
            Let $\alpha\in \Aut(D_l) $, consider the set $S_{\alpha} = \{g \in D_l \ | \ \alpha(g) = g^{-1}\}$ corresponding to $\alpha$, then $T(D_l) \geq |S_{\alpha}|$.
        \end{corollary}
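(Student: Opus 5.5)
The plan is to chain Lemma \ref{sum_of_irrep_equal_to_identity_in_Dn} with Lemma \ref{identity_greater_other}: $T(D_l)=m_e\ge m_\alpha=|S_\alpha|$. The proof of Lemma \ref{identity_greater_other} is terse and swaps the roles of the parameters relative to Lemma \ref{automorphism_characterization_of_D_l}. So I will redo the count explicitly for $\alpha=f_{a,b}$, with $f_{a,b}(r)=r^b$ and $f_{a,b}(s)=r^as$, and then check the bound by parity of $l$ against Corollary \ref{sum_of_irrep_dimensions}. Throughout I take $l\ge 3$, so that Lemma \ref{automorphism_characterization_of_D_l} applies.

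\textbf{Step 1 (rotations).} We have $f_{a,b}(r^k)=r^{bk}$. So $r^k\in S_\alpha$ iff $(b+1)k\equiv 0\pmod l$, which has exactly $g_2:=\gcd(b+1,l)$ solutions with $0\le k<l$.

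\textbf{Step 2 (reflections).} We have $f_{a,b}(r^ks)=r^{bk+a}s$, and every reflection is its own inverse. So $r^ks\in S_\alpha$ iff $(b-1)k\equiv -a\pmod l$. This linear congruence has either $0$ solutions or exactly $g_1:=\gcd(b-1,l)$ solutions. Here I use the convention $\gcd(0,l)=l$, which covers $b=1$. Hence
\[
|S_\alpha|\le g_1+g_2 .
\]

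\textbf{Step 3 (the bound).} Both $g_1$ and $g_2$ divide $l$, so $\mathrm{lcm}(g_1,g_2)\le l$. Lemma \ref{gcd_lcm_inequality} then gives
\[
g_1+g_2\le l+\gcd(g_1,g_2).
\]
The number $\gcd(g_1,g_2)$ divides both $b-1$ and $b+1$, hence divides $2$ (as in Proposition \ref{gcd_secondary_supporting_result}). It also divides $l$.
\begin{itemize}
\item If $l$ is even, this gives $|S_\alpha|\le l+2=T(D_l)$.
\item If $l$ is odd, then $\gcd(g_1,g_2)$ divides both $2$ and an odd number, so it equals $1$. This gives $|S_\alpha|\le l+1=T(D_l)$.
\end{itemize}
Both cases agree with Corollary \ref{sum_of_irrep_dimensions}.

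The main obstacle is the odd case. A direct application of Proposition \ref{gcd_main_supporting_result} with $x_1=b-1$, $x_2=l$, $x_3=b+1$ only yields $l+\gcd(b-1,b+1)$, which can equal $l+2$ when $b$ is odd. That exceeds $T(D_l)=l+1$. So the finer intermediate bound via $\gcd(g_1,g_2)$, rather than $\gcd(b-1,b+1)$, must be used. The edge case $b=1$ (that is, $b-1=0$) also needs the convention $\gcd(0,l)=l$, and it is consistent with the argument above: there $g_1=l$ and $g_2=\gcd(2,l)$, giving exactly $m_e$.
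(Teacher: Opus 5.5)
Your proof is correct and follows the paper's route: the paper obtains the corollary as $T(D_l)=m_e\ge m_\alpha$, and it proves Lemma~\ref{identity_greater_other} with the same rotation/reflection count via linear congruences. Your refinement is genuinely needed, because the paper closes that lemma with Propositions~\ref{gcd_main_supporting_result} and~\ref{gcd_secondary_supporting_result}, which for odd $l$ and odd $b$ (your labeling) give only $|S_\alpha|\le l+2>T(D_l)$ and do not literally cover $b=1$; your observation that $\gcd(g_1,g_2)$ divides both $2$ and $l$ is exactly what closes the odd case. The only thing you omit is $l\le 2$, which is immediate because $D_1$ and $D_2$ are abelian, so $|S_\alpha|\le 2l=T(D_l)$.
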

        \begin{example}
            The automorphisms of $D_3$ and its corresponding involutions are listed as follows.
            \begin{table}[H]
                \centering
                \caption{The Automorphisms of $D_3$}
                \label{tab:aut_d3}
                \begin{tabular}{@{} |c c c l| @{}} 
                    \toprule
                    \textbf{Automorphism ($\sigma$)} & $\boldsymbol{\phi(r)}$ & $\boldsymbol{\phi(s)}$ & \textbf{$S_{\sigma}, m_{\sigma}$} \\
                    \midrule
                    e & $r$ & $s$ & $\{s, rs, r^2s, e\}$, 4 \\
                    $\sigma_1$ & $r$ & $rs$ & $\{e\}$, 1 \\
                    $\sigma_2$ & $r$ & $r^2s$ & $\{e\}$, 1 \\
                    \midrule
                    $\sigma_3$ & $r^2$ & $s$ & $\{e, r, r^2, s\}$, 4 \\
                    $\sigma_4$ & $r^2$ & $sr$ & $\{e, r, r^2, r^2s\}$, 4 \\
                    $\sigma_5$ & $r^2$ & $sr^2$ & $\{e, r, r^2, rs\}$, 4 \\
                    \bottomrule
                \end{tabular}
            \end{table}
            And, we have one irreducible complex representation of degree two and two irreducible complex representations of degree one. We can see that $T(D_3) = 4$ is always greater than or equal to $m_{\sigma}$, for all $\sigma \in \Aut(D_3)$.
        \end{example}
        \begin{lemma}\label{abelian_final_result}
            Let $\alpha$ denote an automorphism of an abelian group $G$. Then $T(G) \geq m_{\alpha}$.
        \end{lemma}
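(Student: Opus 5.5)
The plan is to compare both sides with $|G|$. By Proposition \ref{sum_of_irreducibles_of_abelian_group}, every irreducible complex representation of the abelian group $G$ has degree one. Hence $T(G)$ equals the number of equivalence classes of irreducible representations. By Proposition \ref{number_of_irrep_equals_conjugacy_classes}, this equals the number of conjugacy classes of $G$. In an abelian group every conjugacy class is a singleton, so we obtain $T(G) = |G|$.

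It then remains to bound $m_\alpha$. By definition, $m_\alpha = |S_\alpha|$ with $S_\alpha = \{g \in G \mid \alpha(g) = g^{-1}\} \subseteq G$, so trivially $m_\alpha \le |G| = T(G)$. This proves the statement.

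The inequality needs no structural information about $\alpha$; the only substantive step is the identification $T(G) = |G|$ above. For a sharper remark, one could note that since $G$ is abelian, $g \mapsto \alpha(g)g$ is a homomorphism. Then $S_\alpha$ is exactly its kernel, hence a subgroup of $G$, so $m_\alpha$ divides $T(G)$. Equality $m_\alpha = T(G)$ holds precisely when $\alpha$ is the inversion automorphism $g \mapsto g^{-1}$.

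This refinement is not needed for the lemma, but it explains, for the groups of order $p$ and $p^2$ treated in the paper, when the bound is attained.
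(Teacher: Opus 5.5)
Your proof is correct and is essentially the paper's argument: identify $T(G)=|G|$ via Propositions \ref{sum_of_irreducibles_of_abelian_group} and \ref{number_of_irrep_equals_conjugacy_classes}, then bound $m_\alpha\le |G|$ trivially. Your extra remark that $S_\alpha$ is the kernel of the homomorphism $g\mapsto\alpha(g)g$ is also correct, giving $m_\alpha \mid T(G)$ with equality exactly for inversion. The paper does not make this remark.
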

        \begin{proof}
            Since involutions are the elements of the group $G$, so $m_{\sigma} \leq |G|$. From Propositions \ref{number_of_irrep_equals_conjugacy_classes}, and \ref{sum_of_irreducibles_of_abelian_group}, it follows that $|G| = T(G)$. This completes our proof.
        \end{proof}
        \begin{theorem}\label{final_theorem}
            Let $G$ be a finite group whose order is $2p, p$, or $p^2$. Consider the set $S_{\alpha} = \{g \in G \ | \ \alpha(g) = g^{-1}\}$ corresponding to $\alpha \in \Aut(G)$, then $T(G) \geq |S_{\alpha}|$ for all $\alpha \in \Aut(G)$.
        \end{theorem}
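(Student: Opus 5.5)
The plan is a case split on the isomorphism type of $G$, reducing everything to Lemma \ref{abelian_final_result} and Corollary \ref{main_theorem}.

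First, suppose $|G| = p$ or $|G| = p^2$. Corollary \ref{prime_order_is_abelian} and Proposition \ref{p^2_is_abelian} show that $G$ is abelian. Lemma \ref{abelian_final_result} then gives $T(G) = |G| \geq m_\alpha = |S_\alpha|$ for every $\alpha \in \Aut(G)$.

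Next, suppose $|G| = 2p$. By Proposition \ref{group_of_order_2p}, $G \cong \mathbb{Z}_{2p}$ or $G \cong D_p$. The cyclic case is abelian, so Lemma \ref{abelian_final_result} applies again. In the dihedral case, fix an isomorphism $\theta : G \to D_p$. The assignment $\alpha \mapsto \theta\alpha\theta^{-1}$ is a bijection $\Aut(G) \to \Aut(D_p)$, and $\theta$ maps $S_\alpha$ bijectively onto $S_{\theta\alpha\theta^{-1}}$. It also preserves irreducible representations and their degrees, so $T(G) = T(D_p)$. Corollary \ref{main_theorem} therefore gives $T(G) = T(D_p) \geq |S_{\theta\alpha\theta^{-1}}| = |S_\alpha|$.

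There is one technical point. Lemma \ref{automorphism_characterization_of_D_l}, which underlies Corollary \ref{main_theorem} through Lemma \ref{identity_greater_other}, requires $l \geq 3$. For $p = 2$ the group $D_2$ is the Klein four-group, which is abelian and so is covered by Lemma \ref{abelian_final_result}. For $p$ odd we have $p \geq 3$, so Corollary \ref{main_theorem} applies with $l = p$ odd, and $T(D_p) = p+1$.

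I expect the main obstacle to be not this reduction but the reliability of Corollary \ref{main_theorem} itself. Its proof counts $\gcd(a-1,l)+\gcd(a+1,l)$ and compares this with $m_e$ via Proposition \ref{gcd_main_supporting_result}. I would recheck it for $l = p$ an odd prime, where a direct count settles the matter. Write the automorphism so that $r \mapsto r^b$ with $b \not\equiv 0 \pmod p$.

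\begin{itemize}
\item \emph{Rotations.} Here $r^k$ lies in $S_\alpha$ iff $(b+1)k \equiv 0 \pmod p$. This gives $p$ rotations when $b \equiv -1$, and only $k=0$ otherwise.
\item \emph{Reflections.} These contribute $\gcd(b-1,p)$ elements or none. That is, they contribute at most $1$ when $b \equiv -1$ (because $p$ is odd, $b-1 \equiv -2 \not\equiv 0$), and at most $p$ otherwise.
\end{itemize}

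Hence $|S_\alpha| \leq \max(p+1,\,1+p) = p+1 = T(D_p)$. I would include this direct verification so that the prime case does not rest solely on the general-$l$ argument.
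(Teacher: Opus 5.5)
Your argument is correct, and its skeleton is exactly the paper's. The abelian cases go to Lemma \ref{abelian_final_result}, and a non-abelian group of order $2p$ is identified with $D_p$ and handled by Corollary \ref{main_theorem}. The transport along $\theta$ is a detail the paper leaves implicit. So is your remark that $D_2$ is the Klein four-group, which means only $p\ge 3$ reaches the dihedral lemma, as required since that lemma needs $l\ge 3$.

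Where you genuinely depart is the direct count for $l=p$ odd, and it is worth keeping rather than treating as optional. In the proof of Lemma \ref{identity_greater_other} (where $r\mapsto r^{a}$), the paper bounds $\gcd(a-1,l)+\gcd(a+1,l)$ by $l+\gcd(a-1,a+1)$ via Proposition \ref{gcd_main_supporting_result}. This bound equals $l+2$ whenever $a$ is odd. So for odd $l$ the cited results do not by themselves give $m_\sigma\le l+1=m_e$. An extra step is needed. One option is to replace an odd $a$ by the even representative $a+l$. Another is to note that $\gcd(a-1,l)$ and $\gcd(a+1,l)$ are coprime divisors of $l$ and apply Lemma \ref{gcd_lcm_inequality}.

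Your split into $b\equiv -1$ and $b\not\equiv -1 \pmod p$ avoids this issue entirely. Since $p$ is an odd prime, at most one of $b\pm 1$ is divisible by $p$, and each gcd is $1$ or $p$. This gives $|S_\alpha|\le p+1$ in both cases.

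The paper's route, once patched, proves more: $T(D_l)\ge m_\sigma$ for every $l\ge 3$. Yours proves exactly what the theorem needs, with no reliance on the general gcd inequality.
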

        \begin{proof}
            Given $\alpha \in \Aut(G)$ and the set $S_{\alpha}$ corresponding to $\alpha$. Let $S_e$ denote
            the set corresponding to identity automorphism $e : G \rightarrow G$ defined as $e(x) = x $ for all $x \in G$.
            \begin{align*}
                S_e = \{y \in G \ | \ \text{ord(y)} = 2  \text{ or } \text{ord(y) } = 1\} 
            \end{align*} Then for a prime number $p$, we have the following cases:
            \begin{enumerate}
                \item[Case (i).] When $|G|$ is either $p$ or $p^2$, Corollary \ref{prime_order_is_abelian}, together with Proposition \ref{p^2_is_abelian}, ensures that group is abelian. The desired conclusion then follows from Lemma \ref{abelian_final_result}.
                \item[Case (ii).] When $ |G| = 2p$, Proposition \ref{group_of_order_2p}, implies that $G$ is either abelian or isomorphic to the dihedral group $D_p$.
                \begin{enumerate}
                	\item If $G$ is abelian, then it follows from Lemma \ref{abelian_final_result}, as required.
                	\item If $G$ is non-abelian, then $G$ is isomorphic to $D_p$, and the claim follows from Corollary \ref{main_theorem}.
            	\end{enumerate}
            \end{enumerate}
            This completes our argument.
        \end{proof}
        \begin{remark}
            In fact, Theorem \ref{final_theorem}, remains valid for any finite abelian group. Moreover, the equality $|S_e| = T(G)$ does not hold in general.
        \end{remark}    \begin{example}
            Consider the group $\mathbb{Z}_3$. The only involution of $\mathbb{Z}_3$ under the identity automorphism is $\overline{0}$, whereas $T(\mathbb{Z}_3) = 3$.
        \end{example}

        \bibliography{ref}
        \bibliographystyle{amsplain}
\end{document}